\title{\vspace*{18pt} Distributed PI-Control with Applications to Power~Systems~Frequency~Control}
\author{{Martin Andreasson$^{1}$$^{2}$,  Dimos V. Dimarogonas$^{1}$, Henrik Sandberg$^{1}$ and Karl H. Johansson$^{1}$} \\
\IEEEauthorblockA{\IEEEauthorrefmark{1}ACCESS Linnaeus Center, KTH Royal Institute of Technology, Stockholm, Sweden. }

\thanks{This work was supported in part by the European Commission by the Hycon2 project, the Swedish Research Council (VR) and the Knut and Alice Wallenberg Foundation. We would like to thank the anonymous reviewers for their valuable comments. 
The $2^{\text{nd}}$ author is also affiliated with the Centre for Autonomous Systems at KTH and is supported by the VR 2009-3948 grant. 
$^{2}$ Corresponding author. E-mail: mandreas@kth.se}
}
\newtheorem{theorem}{Theorem}
\newtheorem{corollary}[theorem]{Corollary}
\newtheorem{lemma}[theorem]{Lemma}
\newtheorem{remark}{Remark}
\newtheorem{note}{Note}
\DeclareMathOperator*{\diag}{diag}
\newcommand{\ud}{\,\mathrm{d}}
\newcommand{\beq}{\begin{equation}}
\newcommand{\eeq}{\end{equation}}
\newcommand{\bq}{\begin{eqnarray}}
\newcommand{\eq}{\end{eqnarray}}
\newcommand{\bqn}{\begin{eqnarray*}}
\newcommand{\eqn}{\end{eqnarray*}}
\newcommand{\bee}{\begin{enumerate}}
\newcommand{\eee}{\end{enumerate}}
\newlength\fheight 
\newlength\fwidth 
\begin{document}
\maketitle

\begin{abstract}
This paper considers a distributed PI-controller for networked dynamical systems. Sufficient conditions for when the controller is able to stabilize a general linear system and eliminate static control errors are presented. 
The proposed controller is applied to frequency control of power transmission systems. 
Sufficient stability criteria are derived, and it is shown that the controller parameters can always be chosen so that the frequencies in the closed loop converge to nominal operational frequency. We show that the load sharing property of the generators is maintained, i.e., the input power of the generators is proportional to a controller parameter. The controller is evaluated by simulation on the IEEE 30 bus test network, where its effectiveness is demonstrated. 
\end{abstract}

\section{Introduction}
\label{sec:intro}
Distributed control is the only feasible control strategy for many large-scale systems, when sensing and actuation communication is limited \cite{MorZaf1989}. We will in this paper distinguish between distributed control and decentralized control. In a distributed control architecture, there is no centralized controller with global information, but the controllers can communicate with some of the other controllers and share information. In a decentralized control architecture however, there is no communication between the individual controllers. 
 For systems where constant disturbances or model errors are present, PI-control is a commonly used control strategy, as it will in general eliminate static control errors \cite{aastrom2001future}. For many distributed systems however, decentralized PI-control is known to destabilize the system, as is the case for power transmission systems \cite{machowski2008power}. 

We consider the problem of distributed control of a linear system with the same number of sensors as actuators, and where communication is limited. We show that for a large class of systems, decentralized PI-control is not a feasible control strategy. Instead, we propose a distributed controller, which mimics a decentralized P-controller with a centralized I-controller by distributed averaging. Even though the proportional part of this controller is decentralized, the overall controller is distributed due to the communication needs of the distributed integral part. For a certain class of dynamical systems, the proposed controller is able to eliminate static errors in the output, provided that the closed loop system is output stable, in the sense that all observable modes of the system are stable. 

As mentioned earlier, frequency control of power transmission systems is an important application of distributed PI-control. Traditionally, control with integral action is only carried out by one centralized controller in the power transmission system. However, the increased decentralization of power transmission systems, as well as the independence of micro-grids highlight the need for distributed controllers that do not rely on central coordination. 

A solution to the distributed PI-control problem of power transmission systems has been presented in \cite{Andreasson2012_cdc}. The previously proposed controller however requires phase measurements to be physically implementable. As phase measurements rely on expensive PMUs, it is desirable to study controllers which rely only on local frequency measurements. 
So far, distributed PI-frequency control by distributed averaging has only been considered for a special setting where inverters are used for frequency control in micro-grids \cite{simpson2012droop, simpson2012synchronization}. In these references, stability of the closed-loop power system system was proven, and the controller was shown to preserve the power sharing properties of proportional decentralized frequency controllers with a centralized integrator. A limitation in the analysis is that frequency regulation is assumed to be carried out only by inverters, and not by generators. This also implies that the resulting dynamics of the power transmission system are interconnected first-order differential equations. While it has been shown that the second-order swing equation and the simplified first-order dynamical equation share the same set of equilibra with the same (local) stability properties \cite{dorfler2013synchronization}, the richer second-order dynamics potentially reveal more information about transients. 
In this paper we consider the distributed frequency controller proposed in \cite{simpson2012droop} for a general linear system. We show that the controller can be applied to frequency control of power transmission systems by generator control, where the generator dynamics are modelled by the well-established swing equation \cite{machowski2008power}.
In \cite{Andreasson2013_ecc}, a quadratic generation cost function is introduced, and a distributed algorithm is introduced to minimize the quadratic cost function whilst controlling the frequencies to their nominal value. A solution to the optimization problem was also presented for inverter controlled power transmission systems \cite{bouattour2013distributed}. In this work, we show that the same cost function can be minimized by the proposed distributed PI-controller when carefully selecting controller gains. 

The remaining part of this paper is organized as follows. In Section \ref{sec:model} the model and the problem are introduced. In Section \ref{sec_dec_PI-control} a simple decentralized PI-controller and its limitations are studied. In Section \ref{sec:distributed_PI_control} the distributed PI-controller is introduced and analysed. Section \ref{sec:power_system} applies the previous results to frequency control of power transmission systems by generator control. The paper ends by concluding remarks in Section \ref{sec:discussion}.

\section{Model and problem setup}
\label{sec:model}
Consider a linear system with as many sensors as actuators:
\begin{equation}
\begin{aligned}
\dot{x}(t) &= Ax(t) + Bu(t) + d(t)\\
y(t) &= Cx(t) + \eta(t),
\end{aligned}
\label{eq:linear_system}
\end{equation}
where $x(t) \in \mathbb{R}^n$ is the state, $u(t)\in \mathbb{R}^m$ is the control input, $y(t)\in \mathbb{R}^m$ is the output, $d(t)\in \mathbb{R}^n$ is a disturbance, $\eta(t)\in \mathbb{R}^m$ is measurement noise, and $A\in \mathbb{R}^{n\times n}$, $B\in \mathbb{R}^{n\times m}$, $C\in \mathbb{R}^{m\times n}$. Each sensor is assumed to be coupled with one actuator.
 We refer to each sensor/actuator pair as a \emph{node}. The system is assumed to be physically distributed, making a centralized control architecture infeasible. However, physically neighboring nodes are assumed to be able to communicate directly, and the communication links are modelled by a graph $\mathcal{G}=(\mathcal{V}, \mathcal{E})$, which is assumed to be connected. 
 One important control objective is for the output $y(t)$ to converge to a reference value $r(t)$. We introduce the output error $e(t)=r(t)-y(t)$, and the steady state output error $e_0=\lim_{t\rightarrow \infty}e(t)$. The main control objective can now be stated as $\norm{e_0}=0$. 

\section{Decentralized PI-control}
\label{sec_dec_PI-control}
A simple approach to the control problem detailed in Section \ref{sec:model} is to use a P-controller at each node, i.e.,
\begin{align}
u_i(t) &= K^P_i (r_i(t)-y_i(t)), \label{eq:decentralized_P-control}
\end{align}
where $u_i$ is the $i$'th component of $u$. 
One major drawback with the P-controller however is that $ \norm{e_0}\ne 0$ in general, making it unsuitable when the elimination of static error is essential. A simple and intuitive solution to this problem, is to simply add an integral term to the controller \eqref{eq:decentralized_P-control}:
\begin{align}
u_i(t) &= K^P_i (r_i(t)-y_i(t)) + K^I_i \int_0^{t}  (r_i(\tau)-y_i(\tau)) \ud \tau . \label{eq:decentralized_PI-control}
\end{align}
Unfortunately, this decentralized approach often fails to work in practice for interconnected systems. Define $K^P=\diag([K^P_1, \dots, K^P_m])$ and $K^I=\diag([K^I_1, \dots, K^I_m])$. The following negative result shows that a decentralized PI-controller is infeasible for a certain class of systems. 
\begin{theorem}
\label{th:existence_eq}
 The system \eqref{eq:linear_system} with $u$ given by \eqref{eq:decentralized_PI-control} satisfies $\norm{e_0}=0$ for any constant disturbance $d(t)=d \;\forall t$ and any constant measurement noise $\eta(t) = \eta \;\forall t$ 
only if the matrix
\begin{align}
\Xi&=\begin{bmatrix}
A & BK^I \\
C & 0_{m\times m}
\end{bmatrix}
\label{eq:necessary_cond_1}
\end{align}
has full rank. 
\end{theorem}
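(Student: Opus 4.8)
The plan is to augment the plant with the controller's integral state, translate the requirement $\norm{e_0}=0$ into the existence of a closed-loop equilibrium, and finally relate the equilibrium matrix to $\Xi$ through invertible block-row operations. Introducing the integrator state $z(t)=\int_0^t(r(\tau)-y(\tau))\ud\tau$, so that $\dot z = r - Cx - \eta$ and the control law reads $u = K^P(r-Cx-\eta)+K^I z$, and collecting $\xi=[x^T\ z^T]^T$, the closed loop driven by a constant reference and constant $d,\eta$ becomes $\dot\xi = \mathcal{A}\xi + b$ with
\[
\mathcal{A}=\begin{bmatrix} A-BK^PC & BK^I \\ -C & 0_{m\times m}\end{bmatrix},\qquad b=\begin{bmatrix} BK^P(r-\eta)+d \\ r-\eta\end{bmatrix},
\]
so that an equilibrium $(x_0,z_0)$ satisfies $\mathcal{A}\xi = -b$.

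Next I would show that $\norm{e_0}=0$ for every constant $d,\eta$ forces this equation to be solvable for every $b$. Since $\dot z = r-y = e$, requiring $e(t)\to 0$ is the same as requiring the integrator to come to rest; if $\mathcal{A}\xi=-b$ has no solution, no equilibrium exists and the integrator winds up. I would make this precise by contraposition: if $\mathcal{A}$ is singular there is a nonzero $v$ with $v^T\mathcal{A}=0$, whence $\frac{\ud}{\ud t}(v^T\xi)=v^Tb$, and choosing $d,\eta$ so that $v^Tb\ne 0$ makes $v^T\xi$ grow linearly, so that no steady state is reached and $\norm{e_0}=0$ fails. Because $d$ can displace the top block of $b$ arbitrarily and $\eta$ (through $r-\eta$) the bottom block arbitrarily, $b$ sweeps out all of $\re^{n+m}$; hence $\norm{e_0}=0$ for all constant disturbances is equivalent to $\mathcal{A}$ being nonsingular.

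Finally I would identify $\operatorname{rank}\mathcal{A}$ with $\operatorname{rank}\Xi$. Left-multiplying $\mathcal{A}$ first by $\diag(I_n,-I_m)$ and then by $\begin{bmatrix} I_n & BK^P \\ 0 & I_m\end{bmatrix}$, both invertible, turns the $(1,1)$ block $A-BK^PC$ into $A$ while leaving $BK^I$, $C$ and the zero block untouched, producing exactly $\Xi$. Thus $\mathcal{A}$ is nonsingular if and only if $\Xi$ has full rank, which is the claimed necessary condition.

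I expect the only delicate step to be the reduction from $\norm{e_0}=0$ to the existence of an equilibrium: the linear-drift argument built on the left-null vector of $\mathcal{A}$ is what rules out the error vanishing while the integrator diverges, and one must verify that $b$ truly spans $\re^{n+m}$ so that no disturbance direction is overlooked. The block-row reduction to $\Xi$ is then routine.
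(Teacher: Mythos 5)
Your setup coincides with the paper's: your $\mathcal{A}$ and $b$ are exactly the closed loop \eqref{eq:cl_dynamics_1}, and your block-row reduction of $\mathcal{A}$ to $\Xi$ is the same algebra as the paper's substitution of the steady-state relation \eqref{eq:steady_state_1} into the $\dot{x}$-equation to obtain \eqref{eq:steady_state_2} (your observation that $b$ sweeps out all of $\mathbb{R}^{n+m}$ as $d,\eta$ vary is also correct). The genuine gap is precisely the step you flagged as delicate: the inference from ``$v^T\xi$ grows linearly'' to ``$\norm{e_0}=0$ fails.'' Linear growth of $v^T\xi$ shows only that the \emph{full state} has no equilibrium; it does not show that the \emph{error} fails to converge, because the divergence can live entirely in state directions that the error never sees. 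Concretely, take $n=2$, $m=1$, any $K^P>0$, $K^I>0$, and
\begin{align*}
A=\begin{bmatrix}0 & 0\\ 1 & 0\end{bmatrix},\qquad B=\begin{bmatrix}1\\ 0\end{bmatrix},\qquad C=\begin{bmatrix}1 & 0\end{bmatrix}.
\end{align*}
Then $\Xi$ has two equal rows, hence is singular, and $v=[0,\,1,\,1]^T$ is a left null vector of $\mathcal{A}$ with drift $v^Tb=d_2+r-\eta$, generically nonzero. Yet the closed loop reads
\begin{align*}
\dot x_1 &= -K^P x_1 + K^I z + K^P(r-\eta)+d_1,\\
\dot z &= (r-\eta)-x_1,\\
\dot x_2 &= x_1 + d_2,
\end{align*}
whose $(x_1,z)$-subsystem is autonomous with Hurwitz system matrix (characteristic polynomial $s^2+K^Ps+K^I$), so $x_1\to r-\eta$ and $e=r-x_1-\eta\to 0$ for \emph{every} constant $d$ and $\eta$, while $v^T\xi=x_2+z$ diverges linearly. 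Your contraposition thus breaks down exactly when the left null vector has support on error-invisible state directions.

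This example also shows that the step cannot be repaired by a cleverer argument: it satisfies ``$\norm{e_0}=0$ for all constant $d,\eta$'' while $\Xi$ is singular, so the equivalence you set out to prove (zero steady-state error for all disturbances if and only if $\mathcal{A}\xi=-b$ is solvable for all $b$) is false without additional hypotheses, e.g.\ observability or detectability of the directions in which the state drifts. Note that the paper never attempts your bridging step: its proof works solely with existence of a closed-loop equilibrium and concludes that \eqref{eq:cl_dynamics_1} has an equilibrium only if $\Xi$ has full rank, silently identifying the hypothesis $\norm{e_0}=0$ with the existence of such an equilibrium. On the portion the paper actually proves, your argument is equivalent (and your rank reduction is arguably cleaner); but the additional lemma you supply to connect the two notions is invalid, and the failure is substantive rather than cosmetic.
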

\begin{proof}
Introducing $m$ auxiliary integral state states $z$, the dynamics \eqref{eq:linear_system} with the controller \eqref{eq:decentralized_PI-control} can be written as
\begin{align}
\begin{aligned}
\begin{bmatrix}
\dot{x}(t) \\ \dot{z}(t)
\end{bmatrix}
&=\begin{bmatrix}
A-BK^PC & BK^I \\
-C & 0_{m\times m}
\end{bmatrix}\begin{bmatrix}
{x}(t) \\ {z}(t)
\end{bmatrix} \\
&+ 
\begin{bmatrix}
I_n \\ 0_{m\times n}
\end{bmatrix} d +
\begin{bmatrix}
BK^P \\ I_m
\end{bmatrix} (r-\eta).
\end{aligned}
\label{eq:cl_dynamics_1}
\end{align}
Setting $\dot{z}=0_{n\times 1}$ yields
\begin{align}
Cx = - \eta +r. \label{eq:steady_state_1}
\end{align}
Substituting \eqref{eq:steady_state_1} in \eqref{eq:cl_dynamics_1} and setting $\dot{x}=0_{n\times 1}$ yields
\begin{align}
0&= Ax + BK^Iz+d \label{eq:steady_state_2}
\end{align}
Clearly \eqref{eq:steady_state_1} and \eqref{eq:steady_state_2} have a solution for any $r, \eta, d$ if and only if $\Xi\xi=\zeta$ has a solution for any $\zeta$. Thus \eqref{eq:cl_dynamics_1} has an equilibrium only if $\Xi$ has full rank.
\end{proof}
\section{Distributed PI-control by averaging}
\label{sec:distributed_PI_control}
In this section we explore a distributed PI-controller. Recall that the control system is equipped a communication layer, which is represented by the graph $\mathcal{G}$. Let $\mathcal{N}_i$ denote the neighbor set of node $i$. We assume that only neighbors can communicate directly with each other. 
The proposed controller takes the form:
\begin{align}
\begin{aligned}
\dot{z}_i(t) & = (r_i(t)-y_i(t)) - \gamma \sum_{j\in \mathcal{N}_i} c_{ij}(z_i(t)-z_j(t)) \\
u_i(t) &= K^P_i (r_i(t)-y_i(t)) + K^I_i z_i(t),
\end{aligned} \label{eq:distributed_lag-control}
\end{align}
where $K^P_i>0, K^I_i>0, \gamma>0, i=1, \dots, m$, $c_{ij}=c_{ji}>0, i=1,\dots , m, j\in \mathcal{N}_i$ are controller parameters. Define the weighted Laplacian matrix of the undirected communication graph by its entries:
\begin{align*}
\mathcal{L}_{c,ii} &= \sum_{j\in \mathcal{N}_i} c_{ij} \\
\mathcal{L}_{c,ij} &= \left\{ \begin{array}{ll}
- c_{ij} & \text{ if } j \in \mathcal{N}_i \\
0 & \text{ otherwise.} 
\end{array} \right.
\end{align*}
We will show that this controller can be applied to a wider class of systems than the decentralized PI-controller \eqref{eq:decentralized_PI-control}.
Provided that stability can be proven,
the steady-state output error can be shown to vanish under certain conditions. 
\begin{theorem}
\label{th:steady_state_distributed_lag}
Assume that the system \eqref{eq:linear_system} with the controller \eqref{eq:distributed_lag-control} is output stable for given $K^P$, $K^I$, $\mathcal{L}_c$ and $\gamma$, i.e., that all observable modes of $(F, [C,0_{}m\times m])$ are stable, where
\begin{align*}
F&={\begin{bmatrix}
A-BK^PC & BK^I \\
-C & -\gamma \mathcal{L}_c
\end{bmatrix}}.
\end{align*}
Assume furthermore that $\eta(t)=0$ and $d(t)=d$. 
If there exists $k \in \mathbb{R}$ and an $x \in \mathbb{R}^n$ such that $Ax-kBK^I1_{m\times 1} + {d}$ is an unobservable mode of $(A,C)$, and $Cx=r$, then the steady state error satisfies $ e_0 = 0$.
\end{theorem}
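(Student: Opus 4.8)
The plan is to exhibit a particular solution of the closed loop along which the output error is identically zero, and then to let output stability force every trajectory to match it in the output. First I would write the closed loop compactly: with $\eta = 0$ and $d(t)=d$ constant, the controller \eqref{eq:distributed_lag-control} turns \eqref{eq:linear_system} into $\dot{\xi} = F\xi + g$, where $\xi = [x^T, z^T]^T$, $F$ is as in the statement, and $g = [(BK^P r + d)^T,\, r^T]^T$. The obstacle to the usual ``compute the equilibrium'' argument is that $F$ is generically singular: $\mathcal{L}_c$ has $1_{m\times 1}$ in its kernel and there may be non-decaying unobservable modes, so no static equilibrium need exist. This is exactly why the hypothesis is stated through an unobservable mode rather than through invertibility of $F$.

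Next I would guess the particular solution, guided by $\mathcal{L}_c 1_{m\times 1}=0$. I take $z_p(t) \equiv -k\,1_{m\times 1}$ constant, and let $x_p$ solve the linear ODE $\dot{x}_p = A x_p - k B K^I 1_{m\times 1} + d$ with $x_p(0)=x$, where $k$ and $x$ are the objects supplied by the hypothesis. Writing $w := A x - k B K^I 1_{m\times 1} + d$ for the stipulated unobservable mode, the variation-of-constants formula gives $x_p(t) = x + \int_0^t e^{A\sigma} w\, d\sigma$.

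The crux of the argument is to show $C x_p(t) = r$ for \emph{all} $t$, not merely at $t=0$. Since $w$ is an unobservable mode of $(A,C)$, it lies in the unobservable subspace, which is $A$-invariant and contained in $\ker C$; hence $C e^{A\sigma} w = 0$ for every $\sigma \ge 0$, and integrating yields $C x_p(t) = C x = r$ identically. With $C x_p \equiv r$ in hand I would verify that $(x_p, z_p)$ solves both rows of $\dot\xi = F\xi + g$: the $z$-row collapses to $0 = -r + \gamma k\, \mathcal{L}_c 1_{m\times 1} + r = 0$ using $\mathcal{L}_c 1_{m\times 1}=0$, and the $x$-row collapses to $\dot x_p = A x_p - k B K^I 1_{m\times 1} + d$ once the term $B K^P(r - C x_p)$ cancels.

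Finally I would transfer this to the error. Any trajectory decomposes as $\xi = \xi_p + \xi_h$ with $\xi_h(t) = e^{Ft}\big(\xi(0)-\xi_p(0)\big)$ a homogeneous solution, so $e(t) = r - C x(t) = -[C,\, 0_{m\times m}]\, e^{Ft}\big(\xi(0)-\xi_p(0)\big)$. This is precisely the zero-input output response of $\big(F,[C,\,0_{m\times m}]\big)$, which decays to zero by the assumed output stability; therefore $e_0 = 0$. The one delicate point is the one isolated above: keeping $C x_p(t)=r$ for all time rather than merely matching the output at a single instant, which is where the full unobservability of $w$ (and not just $C w = 0$) is essential.
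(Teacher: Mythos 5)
Your proof is correct, and it rests on the same core idea as the paper's: use the hypothesized pair $(k,x)$ to produce a closed-loop configuration in which the integral state is a constant multiple of $1_{m\times 1}$, the output equals $r$, and the residual drift $w = Ax - kBK^I1_{m\times 1} + d$ is unobservable, then let output stability force every trajectory's output to converge. The difference is in execution, and yours is substantially more rigorous. The paper sets $\dot y = 0$, treats the resulting configuration as an ``equilibrium,'' and asserts that ``the closed loop system converges to this equilibrium'' --- glossing over exactly the point you isolate: since $\mathcal{L}_c 1_{m\times 1} = 0$, the matrix $F$ is singular, no equilibrium need exist, and the state may drift. Your explicit particular solution $z_p \equiv -k\,1_{m\times 1}$, $x_p(t) = x + \int_0^t e^{A\sigma} w\, d\sigma$, together with the observation that the unobservable subspace is $A$-invariant and contained in $\ker C$ (so $C e^{A\sigma} w = 0$ and $C x_p(t) \equiv r$ for all $t$), supplies the step the paper skips; the superposition identity $e(t) = -[C,\, 0_{m\times m}]\, e^{Ft}\bigl(\xi(0)-\xi_p(0)\bigr)$ then makes the appeal to output stability precise, since the zero-input output response of $(F,[C,\,0_{m\times m}])$ decays when all observable modes are stable. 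You also implicitly fix a sign slip: the paper writes $z = k 1_{m\times 1}$ with $Ax = -BK^I z$, whereas consistency with the hypothesis on $Ax - kBK^I 1_{m\times 1} + d$ requires $z = -k 1_{m\times 1}$ as you have it (immaterial, since $k$ ranges over all of $\mathbb{R}$). In short: same route as the paper, but yours is the complete version of the argument.
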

\begin{note}
The condition that $Ax-kBK^I1_{m\times 1} + {d}$ is an unobservable mode of $(A,C)$ assures that there is a common integral state such that the output error vanishes.
\end{note}

\begin{proof}
The dynamics of \eqref{eq:linear_system} with the controller \eqref{eq:distributed_lag-control} can be written as:
\begin{align}
\begin{aligned}
\begin{bmatrix}
\dot{x}(t) \\ \dot{z}(t)
\end{bmatrix}
&=\underbrace{\begin{bmatrix}
A-BK^PC & BK^I \\
-C & -\gamma \mathcal{L}_c
\end{bmatrix}}_{\triangleq F}
\begin{bmatrix}
{x}(t) \\ {z}(t)
\end{bmatrix} \\
&+ 
\begin{bmatrix}
I_n \\ 0_{m\times n}
\end{bmatrix} d +
\begin{bmatrix}
BK^P \\ I_m
\end{bmatrix} (r-\eta).
\end{aligned}
\label{eq:cl_dynamics_2}
\end{align} 
 Since the closed loop system is assumed to be output stable, letting $\dot{y}=0_{n\times 1}$ gives: 
\begin{align}
\begin{aligned}
&C{\begin{bmatrix}
A-BK^PC & BK^I 
\end{bmatrix}}
\begin{bmatrix}
{x}(t) \\ {z}(t)
\end{bmatrix} 
= - 
C d -
CBK^P  r.
\end{aligned}
\label{eq:equilibrium_proof}
\end{align}  
Assuming that $z=k1_{m\times 1}$ and $Cx= r, Ax=-BK^Iz$ implies that all observable modes of $(A,C)$ are zero, and the output satisfies $y=r$, which implies $e_0=0$. Since by assumption all observable modes are stable, the closed loop system converges to this equilibrium. 
\end{proof}
We will show later that for the application of power systems that the distributed controller \eqref{eq:distributed_lag-control} does indeed stabilize the power system, even though the decentralized controller \eqref{eq:decentralized_PI-control} cannot stabilize the power system. 
The case when $\eta\ne 0$ is also treated separately for the application of the proposed controller to electrical power transmission systems, since general error bounds are hard to obtain. 

\section{Power transmission systems}
\label{sec:power_system}
\subsection{Introduction}
Consider an electrical power transmission system of generators interconnected by power transmission lines. For power transmission systems with purely inductive lines and where the voltages are assumed to be constant, the swing equation can be employed to model the dynamics of the system \cite{machowski2008power}. The swing equation is linearized around the equilibrium where $\delta = 0_{n\times 1}$, and one obtains:
\begin{eqnarray}
\label{eq:swing_vector}
\left[\begin{matrix}
\dot{\delta} \\ \dot{\omega}
\end{matrix}\right] = \underbrace{\left[\begin{matrix}
0_{n\times n} & I_{n} \\
-M \mathcal{L}_k & - M D
\end{matrix}\right]}_{A} 
\left[\begin{matrix}
{\delta} \\ {\omega}
\end{matrix}\right] +
\underbrace{\left[\begin{matrix}
0_{n\times n} \\ M 
\end{matrix}\right]}_{B} u +
\underbrace{\left[\begin{matrix}
0_{n\times 1} \\ M p^m
\end{matrix}\right]}_{d}
\end{eqnarray}
where $\delta= [\delta_1, \dots, \delta_n]^T$ and $\omega= [\omega_1, \dots, \omega_n]^T$ are the phase angles and frequencies of the generators, respectively. $M=\diag(\frac{1}{m_1}, \hdots , \frac{1}{m_n})$ where $m_i$ is the inertia of bus $i$. $D=\diag(d_1, \hdots, d_n)$ are the damping coefficients, $p^m=[p^m_1,\hdots, p^m_n]^T$ are the electrical power loads and $u=[u_i,\hdots, u_n]^T$ are the mechanical input. $\mathcal{L}_k$ is the weighted Laplacian of the power system, with edge weights $k_{ij}$, where $k_{ij} = |V_i||V_j|b_{ij}$, where $|V_i|$ is the absolute value of the voltage of bus $i$, and $b_{ij}$ is the susceptance of the power transmission line $(i,j)$. The control objective considered in this application is frequency control. After a disturbance, which is here an increased or decreased load, the frequencies $\omega_i$ should converge to a nominal reference frequency $\omega^{\text{ref}}$. By defining the output of the power transmission system as $y=\omega$ and letting $r=\omega^{\text{ref}} 1_{n\times 1}$, the control objective can be stated as $\norm{e_0}=0$, where $e_0=\lim_{t\rightarrow \infty} \omega^{\text{ref}} 1_{n\times 1}-\omega$.

\subsection{Decentralized PI-control}
Assuming that each bus $i$ can measure only its own frequency $\omega_i$, we have
\begin{align}
C=\begin{bmatrix}
0_{n\times n} & I_n
\end{bmatrix}.
\label{eq:output_C}
\end{align}
By Theorem \ref{th:existence_eq}, a stabilizing decentralized PI-controller can exist only if 
\begin{align*}
\Xi&=\begin{bmatrix}
0_{n\times n} & I_{n} & 0_{n\times n} \\
-M \mathcal{L}_k & - M D & MK^I \\
0_{n\times n} & I_n & 0_{n\times n}
\end{bmatrix}
\end{align*}
is full rank. It is however clear from the above equation that the first $n$ rows are linearly dependent of the last $n$ rows in general. Hence there exists no stabilizing decentralized PI-controller for the power system \eqref{eq:swing_vector}. This is verified by a simulation on the IEEE 30 bus test network \cite{IEEE30}.
The line admittances were extracted from  \cite{IEEE30} and the voltages were assumed to be 132 kV for all buses. The values of $M$ and $D$ were assumed to be given by $m_i = 10^5\; \text{kg}\,\text{m}^2$ and $d_i = 1 \; s^{-1} \; \forall i \in \mathcal{V}$. 
The controller gains were given by $K^P = 0.8 I_n$ and $K^I=0.04I_n$ respectively. The reference frequency $\omega^{\text{ref}}$ was assumed to be  $50$ Hz. As seen in Figure~\ref{fig:power_system_dec_diverging}, the frequencies diverge. 

\setlength\fheight{2.5cm} 
	\setlength\fwidth{6.8cm}
\begin{figure}
	\centering
$
\begin{array}{c}
	\input{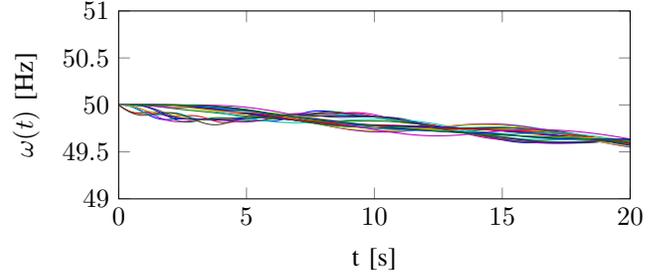}
\end{array}
$
\caption{Bus frequencies with decentralized PI-control and measurement errors. Since the integral states do not converge, the frequencies diverge.}
\label{fig:power_system_dec_diverging}
\end{figure}

\subsection{Distributed PI-control by average consensus}
We show that the controller \eqref{eq:distributed_lag-control} can  be used in control of power transmission systems, where the power flows are governed by the swing-equation \eqref{eq:swing_vector}. We show that the controller achieves asymptotic frequency regulation, while preserving the property of proportional power sharing between the generators. Proportional power sharing based on the ratings of the generators is indeed an important property for generator networks \cite{barsali2002control, hernandez2005fuel} While the controller \eqref{eq:distributed_lag-control} has been applied to frequency control of micro-grids controlled by inverters in \cite{simpson2012droop} and \cite{simpson2012synchronization}, the analysis here is inherently different, since the swing equation is of second-order, as opposed to the first-order models studied in the references. 
The following result establishes the stability of the power system controlled by the distributed PI-controller.

\begin{lemma}
\label{lemma:stability_of_swing}
Assume that the power transmission system \eqref{eq:swing_vector} is controlled by \eqref{eq:distributed_lag-control}, with the reference value given by $r=\omega^{\text{ref}} 1_{n\times 1}$. Assume that $x^T\mathcal{L}_k \mathcal{L}_c x \ge 0$ for all $x\in \mathbb{R}^n$. Then for any $M>0, D>0, \mathcal{L}_k, \mathcal{L}_c, p^m$, and any $K^P>0, K^I>0$ there exists $\bar{\gamma}>0$ such that for all $0<\gamma <\bar{\gamma}$, the closed-loop system is output stable with respect to the output $y=\omega$.
\end{lemma}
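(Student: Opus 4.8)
The plan is to prove output stability by a perturbation argument in the averaging gain $\gamma$, starting from the decoupled case $\gamma=0$ and tracking the closed-loop spectrum as $\gamma$ increases. First I would write the closed-loop matrix explicitly. With $C=[0_{n\times n}\ I_n]$ one obtains, on the state $(\delta,\omega,z)$,
\[
F=\begin{bmatrix} 0 & I_n & 0 \\ -M\mathcal{L}_k & -M(D+K^{P}) & MK^{I} \\ 0 & -I_n & -\gamma\mathcal{L}_c \end{bmatrix}.
\]
Since output stability only concerns the observable modes, and the output reads off the $\omega$-block, the strategy is: (i) determine the spectrum of $F$ at $\gamma=0$; (ii) show that all but an unobservable set of eigenvalues already lie in the open left half-plane; and (iii) show that for small $\gamma>0$ the remaining critical eigenvalues either stay fixed but unobservable, or move into the open left half-plane.

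For step (i), at $\gamma=0$ a Schur-complement computation factors the characteristic polynomial as
\[
\det(\lambda I-F)\big|_{\gamma=0}=\lambda^{n}\,\det\!\big(\lambda^{2}I+\lambda M(D+K^{P})+M(\mathcal{L}_k+K^{I})\big).
\]
Multiplying the quadratic factor by $M^{-1}$ exposes the symmetric coefficients $M^{-1}\succ0$, $D+K^{P}\succ0$, $\mathcal{L}_k+K^{I}\succ0$; pairing the quadratic eigenvalue equation with a (complex) eigenvector reduces it to a scalar quadratic $a\lambda^{2}+b\lambda+c=0$ with $a,b,c>0$, so all $2n$ of these roots lie strictly in the open left half-plane. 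The remaining factor $\lambda^{n}$ shows $\lambda=0$ has algebraic multiplicity $n$; one checks its geometric multiplicity is also $n$, with eigenspace $\{(v_1,0,(K^{I})^{-1}\mathcal{L}_k v_1):v_1\in\mathbb{R}^n\}$. Every such eigenvector has zero $\omega$-block, so the entire kernel is \emph{unobservable}; hence at $\gamma=0$ the system is already (marginally) output stable.

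For steps (ii)–(iii) I would use first-order perturbation of the semisimple zero eigenvalue. The $2n$ stable roots remain in the open left half-plane for small $\gamma$ by continuity. Collecting right and left null vectors as $V=[\,I\,;\,0\,;\,(K^{I})^{-1}\mathcal{L}_k\,]$ and $W=[\,I\,;\,0\,;\,I\,]$, the first-order motion of the $n$ zero eigenvalues is governed by the eigenvalues of
\[
R=(W^{T}V)^{-1}W^{T}F_{\gamma}'V=-\big(I+(K^{I})^{-1}\mathcal{L}_k\big)^{-1}\mathcal{L}_c(K^{I})^{-1}\mathcal{L}_k,
\]
where $F_{\gamma}'=\partial F/\partial\gamma$. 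The vector $1_{n\times 1}$ is an eigenvector of $R$ with eigenvalue $0$, corresponding to the mode $(1_{n\times 1},0,0)$, which one verifies is an \emph{exact} eigenvector of $F$ at $\lambda=0$ for every $\gamma$ and is unobservable. The claim then reduces to showing that the remaining $n-1$ eigenvalues of $R$ have strictly negative real part, so that those modes enter the open left half-plane for small $\gamma>0$.

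The hard part is precisely this last sign condition, and it is where the hypothesis $x^{T}\mathcal{L}_k\mathcal{L}_c x\ge0$ enters. Pairing $R\xi=\mu\xi$ with $\xi$ in a suitable inner product, the real part of $\mu$ has the sign of $-\mathrm{Re}\,\big(\xi^{*}\mathcal{L}_k\mathcal{L}_c\xi\big)$ up to a positive factor, which is $\le0$ exactly by the assumption (equivalently $\mathcal{L}_k\mathcal{L}_c+\mathcal{L}_c\mathcal{L}_k\succeq0$); combined with connectivity of $\mathcal{G}$, so that the neutral directions collapse to $\mathrm{span}(1_{n\times 1})$, this confines the imaginary axis to the single unobservable uniform mode and pushes the other $n-1$ modes strictly left. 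The subtlety I expect to fight is the weighting by the diagonal gain $K^{I}$: because $\mathcal{L}_c$ is not self-adjoint in the $K^{I}$-weighted inner product, the quadratic form does not collapse to $\xi^{*}\mathcal{L}_k\mathcal{L}_c\xi$ cleanly unless $K^{I}$ is a scalar multiple of the identity (or commutes with $\mathcal{L}_c$); controlling the residual commutator term for general diagonal $K^{I}$ is the main obstacle to a fully general proof.
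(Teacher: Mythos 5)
Your route is genuinely different from the paper's---a spectral perturbation analysis of the semisimple zero eigenvalue at $\gamma=0$, rather than the paper's characteristic-polynomial manipulation---and the scaffolding is sound: the factorization of the spectrum at $\gamma=0$, the identification of the $n$-dimensional unobservable kernel, the left eigenspace $W=[I;0;I]$, the invertibility of $W^TV=I+(K^I)^{-1}\mathcal{L}_k$, and the reduced matrix $R=-(I+(K^{I})^{-1}\mathcal{L}_k)^{-1}\mathcal{L}_c(K^{I})^{-1}\mathcal{L}_k$ are all correct. But the proof never closes at the one step that carries the content of the lemma, and you concede as much: you do not establish that the $n-1$ eigenvalues of $R$ off the uniform mode have \emph{strictly} negative real part. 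This is not a removable technicality. For general diagonal $K^I$ the pairing argument does not go through, as you note. Even in the favourable case $K^I=cI_n$, where $R\xi=\mu\xi$ is equivalent to $-\mathcal{L}_c\mathcal{L}_k\xi=\mu(cI_n+\mathcal{L}_k)\xi$ and pairing with $\xi^{*}$ gives $\mathrm{Re}\,\mu=-\xi^{*}(\mathcal{L}_k\mathcal{L}_c+\mathcal{L}_c\mathcal{L}_k)\xi/\bigl(2\,\xi^{*}(cI_n+\mathcal{L}_k)\xi\bigr)$, the hypothesis only yields $\mathrm{Re}\,\mu\le 0$: it is a \emph{semi}definiteness assumption, and connectivity controls $\ker\mathcal{L}_k$ and $\ker\mathcal{L}_c$, not the kernel of the anticommutator $\mathcal{L}_k\mathcal{L}_c+\mathcal{L}_c\mathcal{L}_k$. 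Since one can check that $\ker R=\mathrm{span}(1_{n\times 1})$ (because $K^I1_{n\times 1}$ is never orthogonal to $1_{n\times 1}$), an eigenvector of $R$ lying in a neutral direction of the anticommutator would give a \emph{nonzero purely imaginary} $\mu$, and for such a mode first-order perturbation theory is inconclusive: the corresponding closed-loop eigenvalue may drift into the open right half plane at order $\gamma^{2}$, and that mode is in general observable. So step (iii) fails under the stated hypothesis; repairing it requires either the strictly stronger assumption that $\mathcal{L}_k\mathcal{L}_c+\mathcal{L}_c\mathcal{L}_k$ is positive definite on $\mathrm{span}(1_{n\times 1})^{\perp}$, or a higher-order (or non-perturbative) argument.

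The paper's proof is arranged precisely to avoid this obstacle, which is why the weak hypothesis suffices there. It never diagonalizes a product of Laplacians: it reduces $\det(sI-E)$ to $\det(M)\det Q(s)$ for an explicit cubic matrix polynomial $Q(s)$, compresses to scalar cubics $x^{*}Q(s)x=a_3s^{3}+a_2s^{2}+a_1s+a_0$ over unit vectors $x$, and applies Routh--Hurwitz. The hypothesis enters only through $a_0=\gamma\,x^{T}\mathcal{L}_k\mathcal{L}_cx\ge 0$; in the marginal case $a_0=0$ the cubic factors as $s(a_3s^{2}+a_2s+a_1)$ and the root at the origin is harmless, because the aim is only to confine all roots to $\mathbb{C}^{-}\cup\{0\}$, and the zero eigenvalue of $E$ is then dealt with separately by exhibiting its unobservable eigenvector $[1_{1\times n},0_{1\times n},0_{1\times n}]^{T}$. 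The strict inequalities that are actually needed ($a_1,a_2>0$ and $a_0a_3<a_1a_2$) hold with margin at $\gamma=0$ and persist for all sufficiently small $\gamma$ by continuity, uniformly in $x$ thanks to the explicit eigenvalue bounds---that is where $\bar{\gamma}$ comes from. In short: in the paper's scheme, directions where the $\mathcal{L}_k\mathcal{L}_c$ quadratic form vanishes only ever contribute a root at the origin of an auxiliary scalar polynomial, whereas in your scheme they become modes whose stability the argument leaves undecided.
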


\begin{proof}
The power transmission system \eqref{eq:swing_vector} controlled by \eqref{eq:distributed_lag-control} is described by
\begin{align}
\begin{aligned}
\label{eq:cl_power_proof_distributed_PI}
\begin{bmatrix}
\dot{\delta} \\
\dot{\omega} \\
\dot{z}
\end{bmatrix} 
&=
\underbrace{
\begin{bmatrix}
0_{n \times n} & -I_n & 0_{n \times n} \\
-M \mathcal{L}_k & -M(D+K^P) & MK^I \\ 
0_{n \times n} & -I_n & -\gamma \mathcal{L}_c
\end{bmatrix}}_{\triangleq E}
\begin{bmatrix}
\delta \\ \omega \\ z
\end{bmatrix} \\
&+ \begin{bmatrix}
0_{n \times 1} \\
Mp^m \\
0_{n \times 1}
\end{bmatrix}
+
\begin{bmatrix}
0_{n \times n} \\
MK^P \\
I_{n}
\end{bmatrix}
(r-\eta).
\end{aligned}
\end{align}
The output is given by
\begin{align}
y=\underbrace{\begin{bmatrix}
0_{n\times n} & I_n & 0_{n\times n}
\end{bmatrix} }_{\triangleq C}
\begin{bmatrix}
\delta \\ \omega \\ z
\end{bmatrix}
\end{align}
The stability of \eqref{eq:cl_power_proof_distributed_PI} is determined by the eigenvalues of $E$. Consider the characteristic equation of $E$:
\begin{align}
\begin{aligned}
0 &= \det(sI_{3n} - E) \\
 &= \left| \begin{matrix}
sI_n & -I_n & 0_{n \times n} \\
M \mathcal{L}_k & M(D+K^P) + sI_n & -MK^I \\ 
0_{n \times n} & I_n &  sI_n + \gamma \mathcal{L}_c
\end{matrix} \right| \\
&= \frac{1}{\det{(sI_n+\gamma\mathcal{L}_c)}} \cdot \\
& \left| \begin{matrix}
sI_n & -sI_n-\gamma\mathcal{L}_c & 0_{n \times n} \\
M \mathcal{L}_k & \begin{array}{c}
(M(D+K^P) + sI_n)\cdot \\
(sI_n+\gamma\mathcal{L}_c)
\end{array}  & -MK^I \\ 
0_{n \times n} & sI_n+\gamma\mathcal{L}_c & sI_n + \gamma \mathcal{L}_c
\end{matrix} \right| \\
&= 
 \left| \begin{matrix}
sI_n & -sI_n-\gamma\mathcal{L}_c  \\
M \mathcal{L}_k & \begin{array}{c}
(M(D+K^P) + sI_n)\cdot \\
(sI_n+\gamma\mathcal{L}_c) + MK^I
\end{array} \end{matrix} \right| \\
&= \frac{1}{s^n} 
 \left| \begin{matrix}
sI_n & -sI_n-\gamma\mathcal{L}_c  \\
0_{n\times n} & \begin{array}{c}
s(M(D+K^P) + sI_n)\cdot \\
(sI_n+\gamma\mathcal{L}_c) + sMK^I + \gamma M\mathcal{L}_k \mathcal{L}_c
\end{array} \end{matrix} \right|.
\end{aligned}
\end{align}
Expanding the determinant yields
\begin{align}
\begin{aligned}
0&= \det\left( s^3 I_n + s^2M(D+K^P) + s^2\gamma \mathcal{L}_c \right. \\
&\left. + s\gamma M(D+K^P)\mathcal{L}_c + sMK^I + sM\mathcal{L}_k + \gamma M \mathcal{L}_k \mathcal{L}_c \right) \\
&= \det(M) \det\left( s^3 M^{-1} + s^2(D+K^P) + s^2\gamma M^{-1} \mathcal{L}_c \right. \\
&\left. + s\gamma (D+K^P)\mathcal{L}_c + sK^I + s\mathcal{L}_k + \gamma  \mathcal{L}_k \mathcal{L}_c \right) \\
&\triangleq \det(M)\det(Q(s))
\end{aligned}
\label{eq:char_eq_manipulation}
\end{align}
Clearly the above characteristic equation has a solution only if $\exists x : x^TQ(s)x=0$. We may without loss of generality assume that $\norm{x}=1$. Hence we consider
\begin{align}
\begin{aligned}
&x^T\left( s^3 M^{-1} + s^2(D+K^P) + s^2\gamma M^{-1} \mathcal{L}_c \right. \\
&\left. + s\gamma (D+K^P)\mathcal{L}_c + sK^I + s\mathcal{L}_k + \gamma  \mathcal{L}_k \mathcal{L}_c \right)x = 0.
\end{aligned}
\label{eq:central_proof_Q}
\end{align}
If \eqref{eq:central_proof_Q} has all its solutions in $\mathbb{C}^-$ for all $\norm{x}=1$, then \eqref{eq:char_eq_manipulation} has all its solutions in $\mathbb{C}^-$. This condition thus becomes that the equation
\begin{align}
\begin{aligned}
&\underbrace{x^T(\gamma  \mathcal{L}_k \mathcal{L}_c)x}_{a_0} + 
s \underbrace{x^T(\gamma (D+K^P)\mathcal{L}_c + K^I + \mathcal{L}_k)x}_{a_1} \\
& + 
s^2 \underbrace{x^T(\gamma M^{-1} \mathcal{L}_c)x}_{a_2} + 
  s^3 \underbrace{x^TM^{-1}x}_{a_3} = 0
  \end{aligned}
  \label{eq:xQx}
\end{align}
has all its solutions in the complex left half plane. We distinguish between the two cases $x^T \mathcal{L}_k \mathcal{L}_c x=0$ and $x^T \mathcal{L}_k \mathcal{L}_c x \ne 0$, since by assumption $x^T\mathcal{L}_k \mathcal{L}_c x \ge 0$. Starting with the former case, equation \eqref{eq:xQx} may be written as $s(a_1+ a_2s +a_3s^2)$, which has one solution $s=0$, and all remaining solutions $s\in \mathbb{C}^-$ if and only if $a_i>0, i=1,2,3$, by the Routh-Hurwitz condition. For the latter case, \eqref{eq:xQx} has all its solutions $s\in \mathbb{C}^-$ if and only if $a_i>0, i=1,2,3,4$ and $a_0a_3<a_1a_2$. Thus, $E$ has at most one zero eigenvalue, and all remaining eigenvalues in the complex left half plane if $a_i>0, i=1,2,3,4$ and $a_0a_3<a_1a_2$. Clearly $a_3\ge \min_i M^{-1}_i = \max_i m_i>0 \; \forall \norm{x}=1$ and $a_0 > 0$ by assumption. The following lower bounds on the remaining coefficients are easily verified:
\begin{align}
a_1 &\ge \gamma \lambda_{{\min}} \left( \frac{1}{2} (D+K^P)\mathcal{L}_c + \frac{1}{2} \mathcal{L}_c (D+K^P) \right) + \min_{i} K^I_i \label{eq:a_1_lower} \\
a_2 &\ge \gamma \lambda_{\min} \left( \frac{1}{2} M^{-1} \mathcal{L}_c + \frac{1}{2} \mathcal{L}_c M^{-1} \right) + \min_i D_i + K^P_i. \label{eq:a_2_lower}
\end{align}
By \eqref{eq:a_1_lower} and \eqref{eq:a_2_lower}, a lower bound on $a_1a_2$ is obtained:
\begin{align}
\begin{aligned}
&a_1a_2 \ge \\
&\left( \gamma \lambda_{{\min}} \left( \frac{1}{2} (D+K^P)\mathcal{L}_c + \frac{1}{2} \mathcal{L}_c (D+K^P) \right) + \min_{i} K^I_i \right) \cdot \\
&\left( \gamma \lambda_{\min} \left( \frac{1}{2} M^{-1} \mathcal{L}_c + \frac{1}{2} \mathcal{L}_c M^{-1} \right) + \min_i D_i + K^P_i \right).
\label{eq_a_1a_2_lower}
\end{aligned}
\end{align}
By similar upper bounds on $a_0$ and $a_3$, the following upper bound on $a_0a_3$ is obtained:
\begin{align}
\begin{aligned}
&a_0a_3 \le \gamma \left( \min_i m_i \right)  \lambda_{\max} \left( \frac{1}{2} \mathcal{L}_k \mathcal{L}_c + \frac{1}{2} \mathcal{L}_c \mathcal{L}_k \right).
\end{aligned}
\label{eq:a_0a_3_upper}
\end{align}
Clearly, by \eqref{eq:a_1_lower} and \eqref{eq:a_2_lower}, $a_1>0$ and $a_2>0$ for $\gamma=0$. Furthermore $a_0a_3 < a_1a_2$ when $\gamma=0$. By continuity of polynomial functions, there exists $\bar{\gamma}$ such that $a_1>0$, $a_2>0$ and $a_0a_3 < a_1a_2$ $\forall \gamma < \bar{\gamma}$. The right eigenvector $v_0$ corresponding to the zero eigenvalue of $E$ is $v_0=[1_{1\times n}, 0_{1\times n}, 0_{1\times n}]^T$. However, since $v_0$ is an unobservable mode of $(A,C)$, and all other eigenvalues have strictly negative real part, \eqref{eq:cl_power_proof_distributed_PI} is output stable with respect to the output $y=\omega$. 
\end{proof}

\begin{corollary}
\label{corr:dist_freq_control}
Assume that the power transmission system \eqref{eq:swing_vector} is controlled by \eqref{eq:distributed_lag-control}, with the reference value given by $r=\omega^{\text{ref}} 1_{n\times 1}$. Assume that $x^T\mathcal{L}_k \mathcal{L}_c x \ge 0$ for all $x\in \mathbb{R}^n$. Let $M>0, D>0, \mathcal{L}_k, \mathcal{L}_c, p^m$, and $K^P>0, K^I>0$ be arbitrary, and let  $\eta=0_{n\times 1}$. Then there exists $\bar{\gamma}>0$ such that for all $0<\gamma <\bar{\gamma}$ it holds that $\lim_{t\rightarrow \infty} \omega(t) = \omega^{\text{ref}}1_{n\times 1}$ and $\lim_{t\rightarrow \infty} u(t) = k K^I 1_{n\times 1}$, where $k\in \mathbb{R}$. If $\eta \ne 0_{n\times 1}$, then $\lim_{t\rightarrow \infty} \omega(t) = \hat{\omega} 1_{n\times 1}$, where $\hat{\omega} = \omega^{\text{ref}} - {1}/{n} 1_{1\times n} \eta$. 
\end{corollary}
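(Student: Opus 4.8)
The plan is to combine the output stability from Lemma~\ref{lemma:stability_of_swing} with an equilibrium analysis of \eqref{eq:cl_power_proof_distributed_PI}. By the lemma the only non-decaying eigenvalue of $E$ is the simple zero eigenvalue, whose right eigenvector $v_0=[1_{1\times n},0_{1\times n},0_{1\times n}]^T$ lies entirely in the $\delta$-block. The resonant response of this integrator direction $v_0$ to the constant forcing of \eqref{eq:cl_power_proof_distributed_PI} is therefore a ramp $\delta(t)=c\,t\,1_{n\times 1}+\delta_{\mathrm{c}}(t)$ along $1_{n\times 1}$ only, with $\delta_{\mathrm{c}}$, $\omega$ and $z$ convergent; write $\omega\to\omega^*$, $z\to z^*$. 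First I would record that $\dot\omega\to 0_{n\times1}$ and $\dot z\to0_{n\times1}$, and that $\mathcal{L}_k\delta$ stays bounded since $\mathcal{L}_k 1_{n\times1}=0_{n\times1}$ annihilates the ramp.

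Next I would establish synchrony of the limiting frequency. The $\delta$-row of \eqref{eq:cl_power_proof_distributed_PI} reads $\dot\delta=-\omega$, so $\dot\delta\to-\omega^*$; on the other hand the decomposition above gives $\dot\delta(t)=c\,1_{n\times1}+o(1)$, whence $\omega^*=\hat\omega\,1_{n\times1}$ for a scalar $\hat\omega$. To identify $\hat\omega$ I would left-multiply the $z$-row $\dot z=(r-\eta)-\omega-\gamma\mathcal{L}_c z$ by $1_{1\times n}$; since $1_{1\times n}\mathcal{L}_c=0_{1\times n}$ and $\dot z\to0_{n\times1}$, this yields $1_{1\times n}\omega^*=1_{1\times n}(r-\eta)=n\omega^{\text{ref}}-1_{1\times n}\eta$. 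Combined with $\omega^*=\hat\omega1_{n\times1}$ this gives $\hat\omega=\omega^{\text{ref}}-\tfrac1n 1_{1\times n}\eta$, covering both the $\eta=0$ case ($\hat\omega=\omega^{\text{ref}}$, i.e. $\lim_{t\to\infty}\omega=\omega^{\text{ref}}1_{n\times1}$) and the $\eta\neq0$ case. As a cross-check for $\eta=0$ the same conclusion follows from Theorem~\ref{th:steady_state_distributed_lag}: taking $k=(1_{1\times n}p^m-\omega^{\text{ref}}1_{1\times n}D1_{n\times1})/(1_{1\times n}K^I1_{n\times1})$ makes $p^m-D\omega^{\text{ref}}1_{n\times1}-kK^I1_{n\times1}$ orthogonal to $1_{n\times1}$, hence in $\operatorname{range}\mathcal{L}_k$, so $\mathcal{L}_k\delta_0=p^m-D\omega^{\text{ref}}1_{n\times1}-kK^I1_{n\times1}$ is solvable and $x=[\delta_0^T,\omega^{\text{ref}}1_{1\times n}]^T$ makes $Ax-kBK^I1_{n\times1}+d$ an unobservable mode of $(A,C)$ with $Cx=r$.

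For the control input I would set $\eta=0_{n\times1}$, so $\omega^*=\omega^{\text{ref}}1_{n\times1}=r$. Passing to the limit in the $z$-row gives $\gamma\mathcal{L}_c z^*=r-\omega^*=0_{n\times1}$, so $z^*\in\ker\mathcal{L}_c=\operatorname{span}(1_{n\times1})$ because the communication graph is connected; thus $z^*=k\,1_{n\times1}$ for some $k\in\mathbb{R}$. Since $u=K^P(r-\omega)+K^Iz\to K^P(r-\omega^*)+K^Iz^*=0_{n\times1}+kK^I1_{n\times1}$, I obtain $\lim_{t\to\infty}u(t)=kK^I1_{n\times1}$, which is the claimed proportional power-sharing property.

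The main obstacle is the careful bookkeeping of the single zero eigenvalue. I must justify rigorously that $\omega$ and $z$ genuinely converge while only $\delta$ grows, i.e. that the ramp generated by the constant forcing is confined to the direction $v_0=[1_{1\times n},0_{1\times n},0_{1\times n}]^T$; this is what legitimises $\dot\omega\to0_{n\times1}$, $\dot z\to0_{n\times1}$ and the boundedness of $\mathcal{L}_k\delta$ used above. Once these limiting facts are secured, the remainder reduces to the two Laplacian identities $1_{1\times n}\mathcal{L}_c=0_{1\times n}$ and $\mathcal{L}_k1_{n\times1}=0_{n\times1}$ together with the connectivity of $\mathcal{G}$.
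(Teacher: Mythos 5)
Your proposal is correct, and it reaches all three conclusions of the corollary by an argument that is parallel to, but packaged differently from, the paper's. The paper splits the proof in two: for $\eta=0_{n\times 1}$ it invokes Theorem~\ref{th:steady_state_distributed_lag}, verifying its hypothesis by a rank argument on $[\mathcal{L}_k,\,-K^I 1_{n\times 1}]$; then, for $\eta\neq 0_{n\times 1}$ and for the limit of $u$, it performs the explicit orthonormal change of coordinates $\delta'=[\tfrac{1}{\sqrt{n}}1_{n\times 1},\,S]^T\delta$, drops the unobservable average-phase state, shows the reduced matrix $D'$ is Hurwitz by re-running the proof of Lemma~\ref{lemma:stability_of_swing}, and computes the unique equilibrium of the reduced system. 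You instead work on the full matrix $E$: the simple zero eigenvalue with eigenvector $v_0$ plus a Hurwitz complement gives an invariant-subspace splitting under which the constant forcing produces a ramp only along $v_0$ (i.e.\ only in $\delta$, along $1_{n\times 1}$) while $\omega$ and $z$ converge. This is the abstract counterpart of the paper's coordinate change, and the step you flag as the ``main obstacle'' is precisely what the paper's reduction to the Hurwitz matrix $D'$ makes concrete; given Lemma~\ref{lemma:stability_of_swing} (zero eigenvalue of algebraic multiplicity one, all other eigenvalues in the open left half-plane) it is routine linear ODE theory, so there is no genuine gap. From there your computations coincide with the paper's: synchrony from the $\delta$-row, $\hat{\omega}=\omega^{\text{ref}}-\tfrac{1}{n}1_{1\times n}\eta$ from premultiplying the $z$-row by $1_{1\times n}$, and $z^*\in\ker\mathcal{L}_c=\operatorname{span}(1_{n\times 1})$ giving $u\rightarrow kK^I 1_{n\times 1}$. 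What your route buys: both noise cases and the input limit are handled uniformly, Theorem~\ref{th:steady_state_distributed_lag} is not needed for the main conclusion (you use it only as a cross-check), and your solvability argument --- choosing $k$ so that the right-hand side is orthogonal to $1_{n\times 1}$ and hence lies in the range of the symmetric Laplacian $\mathcal{L}_k$ --- is cleaner than the paper's rank computation; incidentally, your equation $\mathcal{L}_k\delta_0=p^m-D\omega^{\text{ref}}1_{n\times 1}-kK^I 1_{n\times 1}$ is the correct expansion of \eqref{eq:cond_th_dist_PI_power}, whereas the paper's displayed version (with $-p^m$ and $(D+K^P)$) contains sign and term slips that do not affect its argument. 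What the paper's route buys is full explicitness: the reduction is carried out in concrete coordinates, so no appeal to spectral projections is needed beyond what Lemma~\ref{lemma:stability_of_swing} literally states.
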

\begin{remark}
A sufficient condition for when $x^T\mathcal{L}_k \mathcal{L}_c x \ge 0$ for all $x\in \mathbb{R}^n$ is that $\mathcal{L}_c=k_1\mathcal{L}_k$, $k_1 \in \mathbb{R}^+$ i.e., the topology of the communication network is identical to the topology of the power transmission lines. 
\end{remark}

\begin{proof}
We will invoke Theorem \ref{th:steady_state_distributed_lag} to show that $e_0=0$. 
By Lemma \ref{lemma:stability_of_swing}, there exists $\bar{\gamma}>0$ such that for all $0<\gamma <\bar{\gamma}$, the power transmission system \eqref{eq:swing_vector} controlled by \eqref{eq:distributed_lag-control} is output stable. 
Furthermore $\eta=0_{n}$ and $d(t)=d$. Letting $x=[\delta^T, \omega^T]^T$ and setting $\omega = \omega^{\text{ref}}1_{n\times 1}$, it clearly holds that $Cx=r$. For the power transmission system, it is easy to show that $x^T=[\delta^T,\omega^T]^T=[1_{1\times n}, 0_{1\times n}]$ is an unobservable mode of $A$, since
\begin{align*}
Cx&= \begin{bmatrix}
0_{n\times n} & I_n
\end{bmatrix} 
\begin{bmatrix}
1_{n\times 1}\\ 0_{n\times 1}
\end{bmatrix} =
0_{2n\times 1} \\
Ax &= \left[\begin{matrix}
0_{n\times n} & I_{n} \\
-M \mathcal{L}_k & - M D
\end{matrix}\right] \begin{bmatrix}
1_{n\times 1}\\ 0_{n\times 1}
\end{bmatrix} = 0_{2n\times 1},
\end{align*}
which implies that $\mathcal{O}x=0_{2n\times 1}$, where 
\begin{align*}
\mathcal{O} &= \begin{bmatrix}
C \\
CA \\
\vdots \\
CA^{2n-1}
\end{bmatrix}
\end{align*}
is the observability matrix. It is clear that $A$ has rank $2n-1$, which implies that $\mathcal{O}$ also must have rank $2n-1$. Thus, we need to verify that there exist $x=[\delta^T, \omega^T]^T=[\delta^T, \omega^{\text{ref}}1_{1\times n}]^T$ and $k$ such that $Ax-kBK^I1_{2n\times 1} +d = k_2[1_{1\times n}, 0_{1\times n}]^T$. This condition can be written as
\begin{align}
\begin{aligned}
&\left[\begin{matrix}
0_{n\times n} & I_{n} \\
-M \mathcal{L}_k & - M D
\end{matrix}\right] 
\begin{bmatrix}
\delta \\ \omega^{\text{ref}}1_{n\times 1}
\end{bmatrix} - k \begin{bmatrix}
0_{n\times 1} \\ MK^I1_{n\times 1} 
\end{bmatrix} + 
\begin{bmatrix}
0_{n\times 1} \\ Mp^m
\end{bmatrix} \\
&= k_2\begin{bmatrix}
1_{n\times 1}\\ 0_{n\times 1}
\end{bmatrix}.
\end{aligned}
\label{eq:cond_th_dist_PI_power}
\end{align}
The first $n$ rows of \eqref{eq:cond_th_dist_PI_power} are satisfied if we let $k_2=\omega^{\text{ref}}$. Since $M$ is full rank, the last $n$ rows are equivalent to
\begin{align*}
\mathcal{L}_k \delta  - K^I1_{n\times 1} k = -p^m +(D+K^P)1_{n\times 1} \omega^{\text{ref}},
\end{align*}
which can be written in matrix form as
\begin{align*}
\begin{bmatrix}
\mathcal{L}_k & - K^I1_{n\times 1}
\end{bmatrix}
\begin{bmatrix}
 \delta  \\ k
\end{bmatrix}
 = -p^m +(D+K^P)1_{n\times 1} \omega^{\text{ref}}. 
\end{align*}
The above equation has a solution $[\delta^T,k]^T$ for any $-p^m +(D+K^P)1_{n\times 1}$ if and only if $[\mathcal{L}_k, - K^I1_{n\times 1}]$ has rank $n$. Consider:
\begin{align*}
x'^T
\begin{bmatrix}
\mathcal{L}_k & - K^I1_{n\times 1}
\end{bmatrix} = 0_{1\times (n+1)}.
\end{align*} 
The first $n$ columns of the above equation imply $x'=k_3 1_{n\times 1}$. Inserting this in the last column of the above equation yields $k_3 1_{1\times n}K^I1_{n\times 1}=0$, implying $k_3=0$, since the diagonal elements of $K^I$ are strictly positive. Hence $[\mathcal{L}_k, - K^I1_{n\times 1}]$ has rank $n$, and \eqref{eq:cond_th_dist_PI_power} has a solution, and $Ax-kBK^I1_{m\times 1} + {d}$ is an unobservable mode of $(A,C)$. Thus, by Theorem \ref{th:steady_state_distributed_lag}, $e_0=0$. 

We now consider explicitly the case when $\eta\ne 0$, and also study the control signals $u_i$. 
Consider the coordinate change 
\begin{align*}
\delta
 &= \begin{bmatrix}
\frac{1}{\sqrt{n}} 1_{n\times 1} & S
\end{bmatrix} 
\delta' 
 \quad 
\delta' 
 = \begin{bmatrix}
\frac{1}{\sqrt{n}}1_{1\times n} \\ S^T
\end{bmatrix} 
\delta.
\end{align*}
where $S$ is a matrix such that $ \begin{bmatrix}
\frac{1}{\sqrt{n}}1_{n\times 1} & S 
\end{bmatrix}$ is an orthonormal matrix.
In the new coordinates the system dynamics \eqref{eq:cl_power_proof_distributed_PI} are given by:
\begin{align}
\begin{aligned}
\label{eq:cl_power_proof_distributed_PI_delta'}
\begin{bmatrix}
\dot{\delta'} \\
\dot{\omega} \\
\dot{z}
\end{bmatrix} 
&=
{
\begin{bmatrix}
0_{n \times n} & -\begin{bmatrix}
\frac{1}{\sqrt{n}} 1_{1\times n} \\ S^T
\end{bmatrix} & 0_{n \times n} \\
\begin{bmatrix}
0_{n\times 1} & -M \mathcal{L}_k S
\end{bmatrix}
 & -M(D+K^P) & MK^I \\ 
0_{n \times n} & -I_n & -\gamma \mathcal{L}_c
\end{bmatrix}}
\begin{bmatrix}
\delta' \\ \omega \\ z
\end{bmatrix} \\
&+ \begin{bmatrix}
0_{n \times 1} \\
Mp^m \\
0_{n \times 1}
\end{bmatrix}
+
\begin{bmatrix}
0_{n \times n} \\
MK^P \\
I_{n}
\end{bmatrix}
(r-\eta).
\end{aligned}
\end{align}
The state $\delta'_1$ is clearly unobservable, and dropping this state by defining $\delta'' = [\delta_2, \dots, \delta_n]^T$ yields the following dynamics
\begin{align}
\begin{aligned}
\label{eq:cl_power_proof_distributed_PI_delta''}
\begin{bmatrix}
\dot{\delta''} \\
\dot{\omega} \\
\dot{z}
\end{bmatrix} 
&=
\underbrace{
\begin{bmatrix}
0_{(n-1) \times n} & -S^T & 0_{(n-1) \times n} \\
 -M \mathcal{L}_k S
 & -M(D+K^P) & MK^I \\ 
0_{n \times n} & -I_n & -\gamma \mathcal{L}_c
\end{bmatrix}}_{\triangleq D'}
\begin{bmatrix}
\delta'' \\ \omega \\ z
\end{bmatrix} \\
&+ \begin{bmatrix}
0_{n \times 1} \\
Mp^m \\
0_{n \times 1}
\end{bmatrix}
+
\begin{bmatrix}
0_{n \times n} \\
MK^P \\
I_{n}
\end{bmatrix}
(r-\eta).
\end{aligned}
\end{align}
The matrix $D'$ is easily shown to be Hurwitz by following the steps of the proof of Lemma \ref{lemma:stability_of_swing}. 
Explicitly  computing the equilibrium of \eqref{eq:cl_power_proof_distributed_PI_delta''} yields that the first $n$ rows $S^T\omega = 0_{(n-1)\times 1}$, implying $\omega=\hat{\omega}1_{n\times 1}$. Inserting this in the last $n-1$ rows of \eqref{eq:cl_power_proof_distributed_PI_delta''} yields $ (\omega^{\text{ref}} - \hat{\omega})1_{n\times 1} -\gamma \mathcal{L}_c z = \eta$. Premultiplying with $1_{1\times n}$ yields $ (\omega^{\text{ref}} - \hat{\omega})n  = 1_{1\times n}\eta$, or equivalently $\hat{\omega} = \omega^{\text{ref}} - \frac{1}{n} 1_{1\times n} \eta$. If $\eta =0$, then $\hat{\omega}=\omega^{\text{ref}} $, and furthermore the last $n$ rows of \eqref{eq:cl_power_proof_distributed_PI_delta''} imply $z=k_4 1_{n\times 1}$. Thus, at stationarity $u_i = K^P_i (r_i-y_i) + K^I_i z_i(t) =  k_4 K^I_i$, which concludes the proof. 
\end{proof}
Corollary \ref{corr:dist_freq_control} has several important consequences. Firstly, if the integral gains are chosen uniformly, then at stationarity $u_i=u_j \; \forall i,j \in \mathcal{V}$, i.e., power is shared equally amongst the generators. 
Secondly, the distributed PI-controller can asymptotically minimize the quadratic generation cost $\sum_{i \in \mathcal{V}} \frac 12 C_i u_i^2$ s.t. $\mathcal{L}_k \delta -u = P^m -\omega^{\text{ref}} D 1_{n \times 1}$. This requires the integral gains to be chosen as $K^I=C^{-1}$, where $C=[C_1, \dots, C_n]$. For a proof, please refer to \cite{Andreasson2013_ecc}.

\subsection{Simulations}
\begin{figure}[ht]
\begin{center}
\includegraphics[width=\columnwidth-0mm]{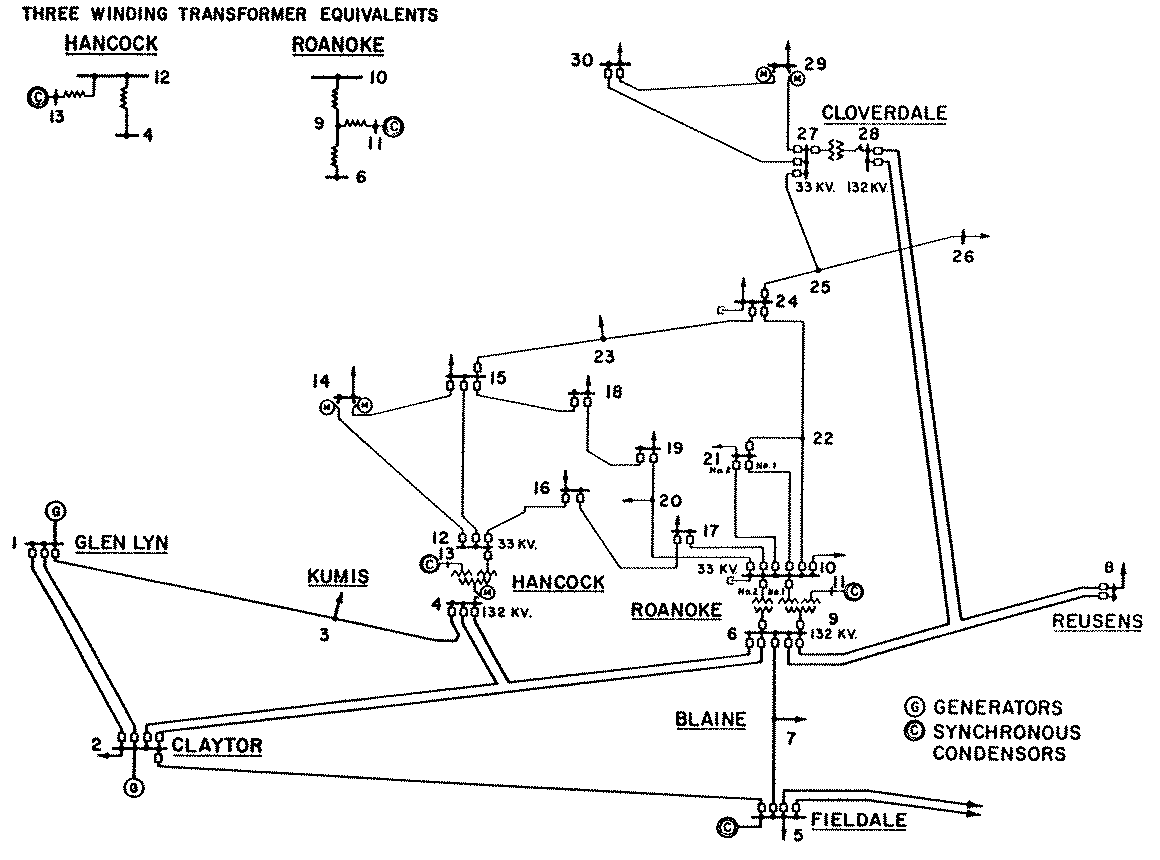}
\caption{The IEEE 30-bus test system, an example of an electrical power system.}
\label{fig:ieee30}
\end{center}
\end{figure}
The power transmission system \eqref{eq:swing_vector} controlled by \eqref{eq:distributed_lag-control} was simulated on the IEEE 30 bus test system, illustrated in Figure \ref{fig:ieee30}. 
The line admittances were extracted from the {IEEE 30} bus test system, and the voltages were assumed to be 132 kV for all buses. The values of $M$ and $D$ were assumed to be given by $m_i = 10^5\; \text{kg}\,\text{m}^2$ and $d_i = 1 \; s^{-1}$, respectively, for all $i \in \mathcal{V}$. The controller gains were given by $K^P=80000I_n$ Ws and $K^I=40000I_n$ W. The communication topology was assumed to be identical with the topology of the power transmission system, i.e., $\mathcal{L}_c=\mathcal{L}_k$. 
The power system is initially in an operational equilibrium, until the power load is increased by a step of $200$ kW in the buses $2,3$ and $7$. This will immediately result in decreased frequencies at the buses where the load is increased as well as in neighboring buses. Subsequently, the frequencies are restored by the distributed PI-controller. 
The step responses of the frequencies are plotted in Figure~\ref{fig:powersystems_sim_1}. The distributed PI-controller quickly regulates the frequencies to the nominal frequency, while the power injections $u_i$ quickly reach an operating point, where all power injections are equal.

	\setlength\fwidth{6.8cm}
\begin{figure}[t]
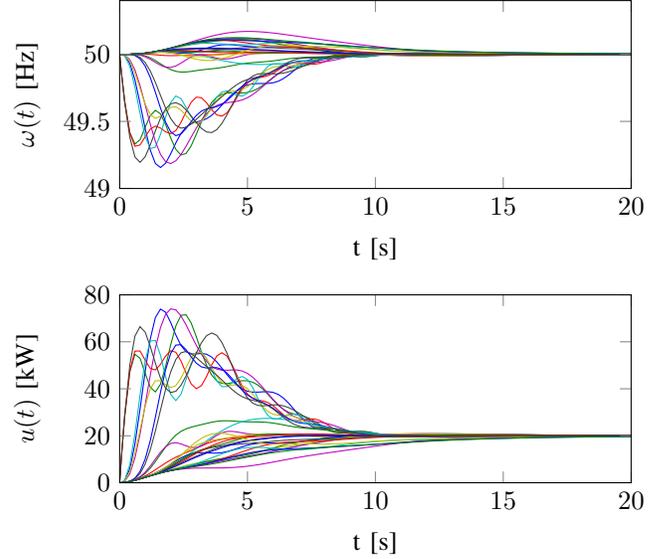

	\centering
$
\begin{array}{c}
	\input{Simulations/Powersystems/omega_dec_noise.tikz} \\
\input{Simulations/Powersystems/u_dec_noise.tikz}
\end{array}
$
\caption{The figures show the bus frequencies and control signals, respectively, of the power system \eqref{eq:swing_vector} controlled by \eqref{eq:distributed_lag-control} under a step load increase.} 
\label{fig:powersystems_sim_1}
\end{figure}

\section{Discussion and Conclusions}
\label{sec:discussion}
In this paper we have considered a distributed PI-controller for networked dynamical systems. Sufficient conditions for when the controller eliminates static control errors were presented. 
The proposed controller was applied to frequency control of power transmission systems by generator control. We showed that the proposed controller regulates the bus frequencies of the power system towards a common reference frequency, while satisfying the power sharing property between the generators. It was shown that there always exist control parameters such that the controlled power transmission system is asymptotically output stable, in the sense that the frequencies converge to the nominal frequency. 

\bibliography{references}
\bibliographystyle{plain}
\end{document}